\thanks{Virtual Plants INRIA, UMR DAP, CIRAD, TA A-96/02, 34398 Montpellier Cedex 5, France, farcot@sophia.inria.fr}
\thanks{COMORE INRIA, Unit\'{e} de recherche Sophia Antipolis Méditerranée , 2004 route des Lucioles, BP 93, 06902 Sophia Antipolis, France, gouze@sophia.inria.fr}
\newtheorem{theorem}{Theorem}
\newtheorem{prop}{Proposition}
\newtheorem{lemma}{Lemma}
\newcommand{\NN}[1]{\{1,\cdots, #1\}}
\newcommand{\Nm}[1]{\{0,\cdots, #1-1\}}
\begin{document}
\RRNo{7130}
\makeRR   

\section{Introduction}
Gene regulatory networks often display both robustness and steep, almost switch-like, response to transcriptional control. This motivates the use of an approximation of these response laws by piecewise affine differential (PWA) equations, to build hybrid models of genetic networks. PWA systems are affine in each rectangular domain (or box) of the state space. They have been introduced in the 1970's by Leon Glass~\cite{glasstopkin} to model genetic networks. It has led to a long series of works by different authors, dealing with various aspects of these equations, {\it e.g.} \cite{casey,edwardetc,farcot06,glasstopkin,gouzesari}. They have been used also as models of concrete biological systems~\cite{bacillus}.

From an hybrid system point of view, the behaviour of PWA systems can be described by a transition graph, which is an abstraction (in the hybrid system sense) of the continuous system. This transition graph describes the possible transition between the boxes. It is also possible to check properties of the transition graph by model checking techniques \cite{batt2005vqm}.

Nowadays, the extraordinary development of biomolecular experimental techniques makes it possible to design and implement control laws in the cell system. 
The authors have recently developed a mathematical framework for controlling gene networks with hybrid controls \cite{FG08}; these controls are defined on each box.
It is easy to see that this amounts to change the transition graph to obtain the desired one.

From another point of view, more oriented towards dynamical systems, it is also possible to obtain results concerning the limit cycles in PWA systems (see \cite{glasspastern2} and the recent generalisation in \cite{FG06a}). For example, one can show that a simple negative loop in dimension greater that two produces a unique stable limit cycle. It is clear that biological oscillations play a fundamental role in the cell (\cite{letnat1}).

Our aim in this paper is to control PWA systems to make a single stable limit cycle appear or disappear. To fulfil that goal, after some recalls concerning the PWA systems, we use  some results enabling to deduce the existence of a single stable limit cycle in the state space from a periodic behaviour in a box sequence (section \ref{sec-cyc}), then  the results on the control of the transition graph in the space of boxes (section \ref{sec-con}), to obtain our main results illustrated by 2 examples (section \ref{sec-ex}).

Related works on  control aspects concern the affine or multi-affine hybrid systems (\cite{HabJVS2004, belta2006ccn}). The authors derive sufficient conditions for driving all the solutions out of some box. Other related works study the existence of limit cycles in the state space \cite{glasspastern2,Edw2009}. We are not aware of works linking control theory and limit cycle for this class of hybrid systems. 

\section{Piecewise affine models}
\label{sec-pwa}
\subsection{General formulation}
This section contains basic definitions and notations for piecewise affine models \cite{glasstopkin,edwards,farcot06,inria}. The general form of these models can be written as:
\begin{equation}\label{eq-genenet}
\frac{dx}{dt} = \kappa(x) - \Gamma(x) x
\end{equation}
The variables $(x_1\dots x_n)$ represent levels of expression of $n$ interacting genes, meaning in general concentrations of 
the mRNA or protein they code for. We will simply call {\it genes} the $n$ network elements in  the following. 
Since gene transcriptional regulation is often considered to follow a steep sigmoid law, an approximation by a step function has been 
proposed to model the response of a gene (i.e. its rate of transcription) to the activity of its regulators~\cite{glasstopkin}. 
We use the notation:
\[
\left\{\begin{array}{lcl}
{\sf s}^+(x,\theta) & = & 0\quad\text{if } x<\theta,\\
{\sf s}^+(x,\theta) & = & 1\quad\text{if } x>\theta,
\end{array}\right.
\]
This describes an effect of activation, whereas ${\sf s}^-(x,\theta)=1-{\sf s}^+(x,\theta)$ represents inhibition. Unless further precision are given, we leave this function undefined at its threshold value $\theta$.\\
The maps $\kappa:\mathbb{R}_+^n\to\mathbb{R}^n_+$ and $\Gamma:\mathbb{R}_+^n\to\mathbb{R}^{n\times n}_+$ in (\ref{eq-genenet}) are usually multivariate polynomials (in general multi-affine), applied to step functions of the form ${\sf s}^\pm(x_i,\theta_{i})$, where for each $i\in\NN{n}$ the threshold values belong to a finite set
\begin{equation}\label{eq-thresh}
\Theta_i=\{\theta_{i}^0,\dots,\theta_{i}^{q_i}\}.
\end{equation}
We suppose that the thresholds are ordered (i.e. $\theta_{i}^{j}<\theta_{i}^{j+1}$), and the extreme values $\theta_i^0=0$ and $\theta_i^{q_i}$ represent the range of values taken by $x_i$ rather than thresholds. \\
$\Gamma$ is a diagonal matrix whose diagonal entries $\Gamma_{ii}=\gamma_i$, are degradation rates of variables in the system.
Obviously, $\Gamma$ and the production rate $\kappa$ are piecewise-constant, taking fixed values in the rectangular domains obtained as Cartesian products of intervals bounded by values in the threshold sets (\ref{eq-thresh}). These rectangles, or {\it boxes}, or {\it regular domains}~\cite{plahmestl98,inria}, are well characterised by integer vectors: we will often refer to a box $\mathcal{D}_a=\prod_i (\theta_i^{a_i-1},\,\theta_i^{a_i})$ by its lower-corner index $a=(a_1\!-\!1\dots a_n\!-\!1)$. The set of boxes is then isomorphic to
\begin{equation}\label{eq-A}
\mathcal{A} = \prod_{i=1}^n\Nm{q_i},
\end{equation}
Also, the following pairs of functions will be convenient notations: $\theta_i^\pm:\mathcal{A}\to\Theta_i$, $\theta_i^-(a)=\theta_{i}^{a_i-1}$ 	and  $\theta_i^+(a)=\theta_{i}^{a_i}$.\\ 
Let us call {\it singular domains} the intersections of closure of boxes with threshold hyperplanes, where some $x_i\in\Theta_i\setminus\{\theta_i^0,\theta_i^{q_i}\}$. On these domains, the right-hand side of (\ref{eq-genenet}) is undefined in general. Although the notion of Filippov solution provides a generic solution to this problem~\cite{gouzesari}, in the case where the normal of the vector field has the same sign on both side of these singular hyperplanes, it is more simply possible to extend the flow by continuity. In the remaining of this paper, we will only consider trajectories which do not meet any singular domain, a fact holding necessarily in absence of auto-regulation, i.e. when no $\kappa_i$ depends on $x_i$. This leads to the following hypothesis:
\[
\forall i\in\NN{n},\; \kappa_i \text{ and } \gamma_i\text{ do not depend on }x_i. \qquad\quad \textbf{(H1)}
\] 

On any regular domain of index $a\in\mathcal{A}$, the rates $\kappa=\kappa(a)$ and $\Gamma=\Gamma(a)$ are constant, and thus equation~(\ref{eq-genenet}) is affine. Its solution is explicitly known, for each coordinate $i$~:
\begin{equation}\label{eq-flow}
\varphi_i(x,t)=x_i(t) = \phi_i(a) + e^{-\gamma_i t}\left(x_i -\phi_i(a)\right),
\end{equation}
where $t\in \mathbb{R}_+$ is such that $x(t)\in{\mathcal{D}_a}$, and 
\[
\phi(a) =
\left(\phi_1(a)\cdots\phi_n(a)\right)=\left(\frac{\kappa_1(a)}{\gamma_1(a)}\cdots\frac{\kappa_n(a)}{\gamma_n(a)}\right).
\]
It is clearly an attractive equilibrium of the flow~(\ref{eq-flow}). It will be called {\it focal point} in the following for reasons we explain now. Let us first make the generic assumption that no focal point lies on a singular domain:
\[
\forall a\in\mathcal{A},\quad \phi(a)\in \bigcup_{a'\in\mathcal{A}}\mathcal{D}_{a'}. \qquad\qquad\quad \textbf{(H2)}
\] 
Then, if $\phi(a)\in\mathcal{D}_a$, it is an asymptotically stable steady state of system~(\ref{eq-genenet}). Otherwise, the flow will reach the (topological) boundary
$\partial{\mathcal{D}_a}$ in finite time. At this point, the value of $\kappa$ (and thus, of $\phi$) changes, and the flow changes its direction, evolving towards a new focal point. The same process carries on repeatedly. It follows that the continuous trajectories are entirely characterised by their successive intersections with the boundaries of regular domains (extending them by continuity, as mentioned previously).\\
This sequence depends essentially on the position of focal points with respect to thresholds. Actually, $\{x\,|\,x_i=\theta_{i}^-(a)\}$ (resp. $\{x\,|\,x_i=\theta_{i}^+(a)\}$) can be crossed if and only if $\phi_i(a) < \theta_{i}^-(a)$ (resp. $\phi_i(a)>\theta_{i}^+(a)$). Then, let us denote $I_{out}^+(a) = \{i\in\NN{n} | \,\phi_i >\theta_{i}^+(a)\}$, and similarly $I_{out}^-(a) = \{i\in\NN{n} | \,\phi_i <\theta_{i}^-(a)\}$. Then, $I_{out}(a)=I_{out}^+(a)\cup I_{out}^- (a)$ is the set of escaping directions of $\mathcal{D}_a$. Also, we call {\it walls} the intersections of threshold hyperplanes with the boundary of a regular domain.\\
When it is unambiguous, we will omit the dependence on $a$ in the sequel. Now, in each direction $i\in I_{out}$ the time at which $x(t)$ encounters the corresponding hyperplane, for $x\in \mathcal{D}_a$, is readily calculated:
\begin{equation}\label{eq-taui}
\tau_i(x)=\frac{-1}{\gamma_i}\ln\left(\frac{\phi_i -\theta_{i}^\pm}{\phi_i -x_i}\right),\qquad i\in I_{out}^\pm.
\end{equation}
Then, 
$\tau(x)=\min_{i\in I_{out}}\tau_i(x)$,
is the {\it exit time} of $\mathcal{D}_a$ for the trajectory with initial condition $x$. Then, we define a \textit{ transition map} ${T }^a: \partial \mathcal{D}_a\rightarrow \partial \mathcal{D}_a$:
\begin{equation}\label{eq-maptrans}
\begin{array}{lcl}
{T }^ax & = & \varphi\left(x,\tau(x)\right)\\
          & = & \phi + \alpha(x) (x-\phi).
\end{array}
\end{equation}
where $\alpha(x) = \exp(-\tau(x)\Gamma)$.\\ 
The map above is defined locally, at a domain $\mathcal{D}_a$. However, under our assumption {\bf (H1)}, any wall can be considered as escaping in one of the two regular domains it bounds, and incoming in the other. Hence, on any point of the interior of a wall, there is no ambiguity on which $a$ to choose in expression~(\ref{eq-maptrans}), and there is a well defined global transition map on the union of walls, denoted $T$. On the boundaries of walls, at intersections between several threshold hyperplanes, the concept of Filippov solution would be required in general \cite{gouzesari}. This problem will either be solved on a case by case basis, or we implicitly restrict our attention to the (full Lebesgue measure) set of trajectories which never intersect more than one threshold hyperplane.\\

To conclude this section let us define the {\it state transition graph} ${\sf TG}$ associated to a system of the form (\ref{eq-genenet}) as the pair $(\mathcal{A},\mathcal{E})$ of nodes and oriented edges, where $\mathcal{A}$ is defined in (\ref{eq-A}) and $(a,b)\in\mathcal{E}\subset \mathcal{A}^2$ if and only if $\partial \mathcal{D}_a\cap \partial \mathcal{D}_b \neq\varnothing$, and there exists a positive Lebesgue measure set of trajectories going from $\mathcal{D}_a$ to $\mathcal{D}_b$. It is not difficult to see that this is equivalent to $b$ being of the form $a\pm \mathrm{e}_i$, with $i\in I_{out}^\pm(a)$ and $\mathrm{e}_i$ a standard basis vector.\\
From now on, it will always be assumed that {\bf (H1)} and {\bf (H2)} hold, at least in some region of state space (or transition graph) on which we focus.\\

\subsection{Illustrative example}
Let us now illustrate the previous notions on a well-known example with two variables, in order to help the reader's intuition. Consider two genes repressing each other's transcription. In the context of piecewise-affine models, this would be described by the system below:
$$
\scalebox{0.9}{\xymatrix{*+[o][F-] {1}  \ar@{|}@/^1pc/[r] & *+[o][F-] {2}  \ar@{|}@/^1pc/[l]}}
\qquad
\left\{\begin{array}{lcl}
\dot x_1 & = & \kappa_1^0+\kappa_1^1{\sf s}^-(x_2,\theta_2^1) - \gamma_1x_1 \\[1mm]
\dot x_2 & = & \kappa_2^0+\kappa_2^1{\sf s}^-(x_1,\theta_1^1) - \gamma_2x_2
\end{array}\right. 
$$
where inhibition is modeled by ${\sf s}^-(x,\theta)$, as already mentioned. A usual notation for the interaction graph uses $\xymatrix @!0 @R=1pc @C=1.5pc
{\ar@{|}[r] &}$ to denote inhibition, and $\xymatrix @!0 @R=1pc @C=1.5pc{\ar[r]&}$ to denote activation.\\
The two constants $\kappa_i^0$ represent the lowest level of production rates of the two species in
interaction. It will be zero in general, but may also be a very low positive constant, in some cases where a
gene needs to be expressed permanently.\\
In the given equation, arbitrary parameters may lead to spurious behaviour, in particular an inhibition which would not drop its target variable below its threshold. To avoid this, it suffices to assume the following conditions on focal points' coordinates:
$$
\frac{\kappa_i^0}{\gamma_i} < \theta_i^1 \quad\text{and}\quad
\frac{\kappa_i^0 +\kappa_i^1}{\gamma_i} > \theta_i^1,\qquad\text{for }i=1,2.
$$
This might be called {\em structural constraints} on parameters. The phase space of this system is schematised on Figure~\ref{fig-explan}.\\
\begin{center}
\begin{figure}
\begin{center}
\scalebox{0.8}{\input{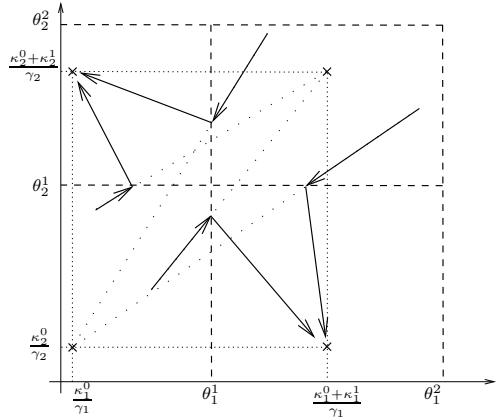}}
\caption{\label{fig-explan} The dashed lines represent threshold hyperplanes, and define a rectangular partition of state space, and dotted lines indicate focal points' coordinates. Arrows represent schematic flow lines, pointed toward these limit points. Note that pieces of trajectories are depicted as straight lines, which is the case when all degradation rates $\gamma_i$ coincide, a fact we never assume in the present study.}
\end{center}
\end{figure}
\end{center}
Then, the transition graph of the system takes the form:
$$
\scalebox{.9}{\xymatrix {
*+[o][F-] {\mathbf{01}}  & 11\ar@*{[|<1pt>]}[l] \ar@*{[|<1pt>]}[d]  \\
00  \ar@*{[|<1pt>]}[r]\ar@*{[|<1pt>]}[u]  & *+[o][F-] {\mathbf{10}}}}
$$
where circled states are those with no successor. It appears in this case that $\mathsf{TG}$ constitutes a reliable abstraction of the system's behaviour. In general, things are not as convenient, and some paths in the transition graph may be spurious. In particular, cyclic paths may correspond to damped oscillations of the original system, but even this cannot be always ascertained without a precise knowledge of the parameter, see section \ref{sec-cyc} for related results. However, one general goal of the present study will be to search for feedback control laws ensuring that given systems are indeed well characterised by their abstraction. Such a property can be deduced from the shape of the abstraction $\mathsf{TG}$ itself, whence the term 'qualitative control'.

\section{Stability and limit cycles}
\label{sec-cyc}
Periodic solutions have soon been a prominent topic of study for systems of the form (\ref{eq-genenet}) \cite{glasspastern2,snoussi,periodsol,edwards,Edw2009}. With the notable exception of \cite{snoussi}, all these studies focused on the special case where $\Gamma$ is a scalar matrix, which greatly simplifies the analysis, since trajectories in each box are then straight lines towards the focal point, as in Figure \ref{fig-explan}. In a recent work \cite{FG06a,ab09,ijss09}, we have shown that the local monotonicity properties of transition maps can be used to prove existence and uniqueness of limit cycles in systems like (\ref{eq-genenet}). In this section we recall without proof some of these results.\\
In the rest of this section we consider a piecewise-affine system such that there exists a sequence  $\mathcal{C}=\{a^0\dots a^{\ell-1}\}$ of regular domains which is a cycle in the transition graph, and study periodic solutions in this sequence. We abbreviate the focal points of these boxes as $\phi^i=\phi(a^i)$. 
Let us now define a property of these focal points: we say that the points $\phi^i$ are {\it aligned} if
\begin{equation}\label{eq-align}
\forall i\in\Nm{\ell},\,\exists! j\in\NN{n},\quad\phi^{i+1}_j-\phi^i_j\neq 0,
\end{equation}
where $\phi^{\ell}$ and $\phi^0$ are identified.\\
Since $\mathcal{C}$ is supposed to be a cycle in $\mathsf{TG}$, for each pair $(a^i,a^{i+1})$ of successive boxes there must be at least one coordinate at which their focal points differ, namely the only $s_i\in I_{out}(a^i)$ such that $a^{i+1}=a^i\pm\mathrm{e}_{s_i}$. We keep on denoting $s_i$ this {\it switching} coordinate in the following.
 Hence condition (\ref{eq-align}) means that $s_i$ is the only coordinate in which $\phi^i$ and $\phi^{i+1}$ differ. This implies in particular that $a^{i+1}$ is the only successor of $a^i$, i.e. there is no edge in $\mathsf{TG}$ from $\mathcal{C}$ to $\mathcal{A}\setminus\mathcal{C}$. It might seem  intuitive in this case that all orbits in $\cal C$ converge either to a unique limit cycle, or to a point at the intersection of all crossed thresholds. However, this fact has only been proved for uniform decay rates (i.e. $\Gamma$ scalar), \cite{glasspastern2}, and its validity with distinct decay remains an open question. \\
The condition (\ref{eq-align}) is of purely geometric nature. However, it can be shown that it holds necessarily when the interaction graph has degree one or less, see \cite{ijss09} for more details.\\ 
If $\{s_i\}_{0\leqslant i<\ell}=\NN{n}$, i.e. all variables are switching along $\cal C$, then the intersection of all walls between boxes in $\cal C$ is either a single point, which we denote $\theta^{\mathcal{C}}$, or it is empty. The latter holds when two distinct thresholds are crossed in at least one direction. When defined,  $\theta^{\mathcal{C}}$ is a fixed point for any continuous extension of the flow in $\cal C$, see \cite{ijss09}.\\
Let us now rephrase the main result from \cite{ijss09}.
\begin{theorem}\label{thm-main}
 Let ${\cal C}=\{a^0,a^1\cdots a^{\ell-1}\}$ denote a sequence of regular domains which is periodically visited by the flow, and whose focal points satisfy condition (\ref{eq-align}).
 Suppose also that all variables are switching at least once.\\
Let $W$ denote the wall $\partial \mathcal{D}_{a^0}\cap\partial \mathcal{D}_{a^{1}}$, and consider the first return map $\mathbf{T}:W\to W$ defined as the composite of local transition maps along $\mathcal{C}$.\\
$A)$ If a single threshold is crossed in each direction, let $\lambda =\rho(D\mathbf{T}(\theta^{\mathcal{C}}))$, the spectral radius of the differential $D\mathbf{T}(\theta^{\mathcal{C}})$.
Then, the following alternative holds:\\
\hspace*{3mm}{\bf i)} if $\lambda\leqslant 1$, then $\forall x\in W$, $\mathbf{T}^n x\to \theta^{\mathcal{C}}$ when $n\to\infty$. \\
\hspace*{3mm}{\bf ii)} if $\lambda>1$ then there exists a unique fixed point different from  $\theta^{\mathcal{C}}$, say $q=\mathbf{T} q$. Moreover, for every $x\in W\setminus\{\theta^{\mathcal{C}}\}$, $\mathbf{T}^n x\to q$ as $n\to\infty$. \\
$B)$ If there are two distinct crossed thresholds in at least one direction, then the conclusion of {\bf ii)} holds.
\end{theorem}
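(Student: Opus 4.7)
The plan is to exploit the explicit, product-like form of each local transition map $T^{a^i}$ in the aligned case, and then handle the composed first-return map $\mathbf{T}$ via the Perron--Frobenius theory of order-preserving concave maps on a cone. The first step is a coordinate setup: on each wall $W^{i,i+1}$, parametrize points by their $n-1$ non-switching coordinates, and translate by the focal component $\phi_j^i$ in each direction $j\neq s_i$. Using the explicit exit time (\ref{eq-taui}) and the fact that in the aligned case the switching direction $s_i$ is the only direction where $\phi^i$ and $\phi^{i+1}$ differ, the local map reads as a diagonal scaling $(x_j-\phi_j^i)\mapsto \alpha_j(x_s)(x_j-\phi_j^i)$ with a common multiplier $\alpha_j=((\phi_s-\theta_s^\pm)/(\phi_s-x_s))^{\gamma_j/\gamma_s}$ that depends only on the incoming switching coordinate. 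This reveals that each local map preserves a natural order cone based at the wall intersection and is concave in that cone after a suitable logarithmic change of variable.

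For case A, all walls along $\mathcal{C}$ share the common intersection $\theta^{\mathcal{C}}$, which is automatically fixed by $\mathbf{T}$ by a continuous extension argument. I would then check that the composed first-return map $\mathbf{T}$, written in coordinates centered at $\theta^{\mathcal{C}}$, is monotone and concave with respect to the relevant orthant cone $K\subset W$. Appealing to the Krein--Rutman / Krasnoselskii--Nussbaum theorem for order-preserving concave maps with a fixed point at the apex, the dichotomy is governed precisely by $\lambda=\rho(D\mathbf{T}(\theta^{\mathcal{C}}))$: when $\lambda<1$ the linearization is contracting and concavity propagates this to a global attraction of $\theta^{\mathcal{C}}$, while when $\lambda>1$ a standard sub/super-solution argument produces a unique interior fixed point $q\in K\setminus\{\theta^{\mathcal{C}}\}$, and concavity plus monotonicity gives $\mathbf{T}^n x\to q$ for every $x\in W\setminus\{\theta^{\mathcal{C}}\}$. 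The borderline $\lambda=1$ is the delicate sub-case: there I would use the strict concavity of $\mathbf{T}$ along each orbit (inherited from the strict concavity of each $\alpha_j$) to rule out non-trivial fixed points and still obtain global convergence to $\theta^{\mathcal{C}}$.

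For case B, the walls along $\mathcal{C}$ no longer share a common intersection, so $\theta^{\mathcal{C}}$ is absent. The same concavity/monotonicity of $\mathbf{T}$ still holds, but now the ``corner'' fixed point is removed from $W$. The first-return map is consequently a concave, order-preserving self-map of a bounded convex set that does not fix the boundary corner; by a Brouwer/fixed-point argument combined with the concavity, there exists a fixed point $q$ in the interior, and uniqueness plus global attraction follow from strict concavity exactly as in case A ii). In effect, case B corresponds to the regime where the ``missing'' boundary fixed point would have $\lambda>1$.

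The main obstacle is establishing the concavity and monotonicity of the composed map $\mathbf{T}$ with respect to a single, well-chosen cone: each local map is naturally concave in its own coordinates, and one must track how cones pull back across successive walls and show that the structural properties survive composition. A second technical point is the $\lambda=1$ case, where linear information is insufficient and one has to lean explicitly on the nonlinearity coming from the exponents $\gamma_j/\gamma_s\neq 1$ (a feature that makes this result genuinely more general than the scalar-$\Gamma$ analyses of \cite{glasspastern2}).
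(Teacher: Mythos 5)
The paper does not prove Theorem~\ref{thm-main}: section~\ref{sec-cyc} explicitly ``recalls without proof'' this result from \cite{ijss09} (and \cite{FG06a,ab09} for the loop case), so there is no in-paper argument to compare against. Measured against those references, your overall strategy is the right one and essentially the one used there: write each local transition map in coordinates translated by the focal point, observe that under the alignment condition the exit time of each box depends on a single coordinate so that the local map is a diagonal scaling with exponents $\gamma_j/\gamma_{s}$, show that the composed first-return map is order-preserving and concave with respect to a suitably signed orthant cone with apex at $\theta^{\mathcal{C}}$, and invoke the Perron--Frobenius-type dichotomy for monotone concave maps (convergence to the apex when $\rho(D\mathbf{T}(\theta^{\mathcal{C}}))\leqslant 1$, a unique attracting interior fixed point when $\rho>1$), with case $B)$ handled by showing the map pushes the region strictly away from the missing corner.

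There is, however, one concrete soft spot in how you source the concavity, and it sits exactly at what you yourself call the main obstacle. The local scaling $(u_j,u_s)\mapsto u_j\left(c_s/u_s\right)^{\gamma_j/\gamma_s}$ is \emph{not} concave in the translated coordinates: the Hessian of $(u,v)\mapsto uv^{-r}$ has negative determinant, hence is indefinite for every $r>0$. Nor does the logarithmic change of variable help in the way you suggest: in log coordinates each scaling step becomes \emph{affine}, and if that were the whole story the return map would be an affine (monomial) map whose dynamics would contradict conclusion {\bf ii)}. The genuine nonlinearity --- and the concavity the fixed-point theorem needs --- enters only through the re-basing of coordinates forced by condition (\ref{eq-align}): each time the switching variable crosses its threshold the focal point changes in exactly one coordinate, so the translated variable undergoes an additive shift between two multiplicative steps, and it is the alternation of power-law scalings with these shifts that produces a monotone concave composite on an invariant subregion. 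Establishing this (including the correct choice of signs/orders on successive walls so that concavity and monotonicity survive composition, and the strictness needed for the borderline $\lambda=1$ case) is the actual content of the proofs in \cite{ab09,ijss09}; your outline names the right target but attributes the concavity to the wrong mechanism.
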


 In \cite{FG06a,ab09} we have resolved the alternative above for a particular class of systems:
\begin{theorem}\label{thm-negloop}
Consider a negative feedback loop system of the form 
$$
\dot{x}_i= \kappa_i^0+{\sf s}^{\varepsilon_i}(x_{i-1},\theta_{i-1})-\gamma_ix_i,\qquad \varepsilon_i\in\{-,+\}\quad i\in\NN{n},
$$
with subscripts understood modulo $n$, and an odd number of negative $\varepsilon_i$. It can be shown that there exists a cycle $\cal C$ in $\mathsf{TG}$ whose focal points satisfy (\ref{eq-align}).\\
Then, in Theorem \ref{thm-main}, A.{\bf i)} holds in dimension $n=2$, and A.{\bf ii)} holds for all $n\geqslant 3$.
\end{theorem}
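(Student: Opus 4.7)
The plan is to identify the cycle $\mathcal{C}$ directly from the loop structure, then linearise the first-return map $\mathbf{T}$ at the central fixed point $\theta^{\mathcal{C}}$, and finally apply Theorem~\ref{thm-main} via an explicit spectral analysis of $D\mathbf{T}(\theta^{\mathcal{C}})$.

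The first step, producing $\mathcal{C}$, exploits that each $\kappa_i$ and $\gamma_i$ depends only on $x_{i-1}$, so $\phi_i$ takes exactly two values, selected by the sign of $x_{i-1}-\theta_{i-1}$. Having an odd number of inhibitions forbids any sign pattern from being stationary, and starting from a suitably chosen box the flow generates a sequence of $2n$ regular domains in which a single coordinate flips at each step, in the cyclic order $1\to 2\to\cdots\to n\to 1\to\cdots$, before returning to the start. Since crossing the $j$-th threshold changes only $\phi_{j+1}$, the alignment condition (\ref{eq-align}) holds, and $\mathcal{C}$ is a cycle of length $2n$ in ${\sf TG}$ that meets the hypotheses of Theorem~\ref{thm-main}.

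For the second step, all $n$ variables switch along $\mathcal{C}$ and a single threshold is crossed in each direction, so $\theta^{\mathcal{C}}$ is well-defined and is fixed by $\mathbf{T}$. Differentiating the composition of the $2n$ local transitions (\ref{eq-maptrans}) at $\theta^{\mathcal{C}}$ and exploiting the cyclic symmetry of the loop---the switching coordinates run through $1,\dots,n$ twice with focal values depending only on their predecessor---reduces the Jacobian to a product of $(n-1)\times(n-1)$ matrices of very rigid form, whose characteristic polynomial becomes a scalar equation in the $n$ geometric ratios $\alpha_i^{*}\in(0,1)$ of the type $(\theta_{s_i}-\phi_{s_i})/(\phi_{s_i}^{\mathrm{prev}}-\phi_{s_i})$ evaluated at $\theta^{\mathcal{C}}$, and in the decay rates $\gamma_i$.

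The main obstacle is the final spectral dichotomy: showing that the spectral radius $\lambda$ of this reduced matrix satisfies $\lambda\leqslant 1$ for $n=2$ and $\lambda>1$ for $n\geqslant 3$, under the structural constraints ensuring that every variable actually crosses its threshold. In dimension $n=2$, $D\mathbf{T}(\theta^{\mathcal{C}})$ is a scalar (the wall is one-dimensional), given as an explicit product of contraction ratios bounded by $1$. For $n\geqslant 3$, the accumulated rotational structure of the $2n$-cyclic return is expected to force at least one eigenvalue to have modulus strictly exceeding $1$; I expect this to follow from a quantitative extension to non-scalar $\Gamma$ of the classical Glass--Pasternack computation for negative loops with uniform decay. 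Invoking Theorem~\ref{thm-main} then yields case A.\textbf{i)} in dimension $2$ and case A.\textbf{ii)} for $n\geqslant 3$.
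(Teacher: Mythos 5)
First, a point of reference: the paper itself gives no proof of Theorem~\ref{thm-negloop} --- Section~\ref{sec-cyc} explicitly recalls it \emph{without proof} from \cite{FG06a,ab09} --- so your proposal must be measured against the argument of those references. Your first two steps match their strategy and are essentially sound: the $2n$-cycle $\mathcal{C}$ exists because an odd number of inhibitions forbids any stationary sign pattern, the alignment condition (\ref{eq-align}) is automatic since crossing threshold $i-1$ changes only $\phi_i$, all $n$ variables switch along $\mathcal{C}$, and Theorem~\ref{thm-main} then reduces everything to the value of $\lambda=\rho(D\mathbf{T}(\theta^{\mathcal{C}}))$.

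The gap sits at the decisive third step, which you do not prove but only ``expect''. The dichotomy $\lambda\leqslant 1$ for $n=2$ versus $\lambda>1$ for $n\geqslant 3$ \emph{is} the content of the theorem, and the route you gesture at --- a ``quantitative extension to non-scalar $\Gamma$ of the classical Glass--Pasternack computation'' --- is precisely what is not available off the shelf: with distinct $\gamma_i$ the local maps (\ref{eq-maptrans}) carry exponents $\gamma_j/\gamma_{s_i}$, the return map is no longer fractional linear, and the Perron--Frobenius argument on an explicit non-negative matrix from \cite{glasspastern2} does not transfer; supplying a substitute is exactly the work done in \cite{FG06a,ab09}, via an explicit computation of $D\mathbf{T}(\theta^{\mathcal{C}})$ as a product of rank-one perturbations of the identity. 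Your $n=2$ justification is also wrong in an instructive way: the scalar derivative is a product of factors of the form $(\gamma_{s_{i-1}}/\gamma_{s_i})\,\bigl|\theta_{s_{i-1}}-\phi_{s_{i-1}}(a^i)\bigr|/\bigl|\phi_{s_i}(a^i)-\theta_{s_i}\bigr|$, none of which need individually be $\leqslant 1$; what actually happens is that along the $4$-cycle each such ratio reappears inverted (and the $\gamma$-ratios telescope), so the product equals exactly $1$. Hence $n=2$ is the borderline case $\lambda=1$ of A.\textbf{i)}, not a strict contraction, and convergence to $\theta^{\mathcal{C}}$ rests on the full strength of Theorem~\ref{thm-main} rather than on $|D\mathbf{T}|<1$. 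For $n\geqslant 3$ no argument is given at all; ``accumulated rotational structure'' is a heuristic, not a proof. As written, the proposal verifies the hypotheses of Theorem~\ref{thm-main} but does not resolve its alternative, which is the claim to be proved.
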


\section{Piecewise Control}
\label{sec-con}
Feedback regulation is naturally present in many biological systems, as the widespread appelation 'regulatory network' suggests. Hence, it seems appropriate to take advantage of the important body of work developed in feedback control theory for decades, in order to study gene regulatory networks and related systems \cite{iglesiasingalls,sontag05}.\\
In particular, the recent advent of so called {\it synthetic biology}~\cite{msb06,pnas-22-04}
, has led to a situation where gene regulatory processes are not only studied, but designed to perform certain functions. Hence, autonomous systems of the form (\ref{eq-genenet}) should to be extended, so as to include possible input variables. In~\cite{FG08}, we have presented such an extension, where both production and decay terms have some additional argument $u\in\mathbb{R}^p$, of which they were affine functions. In this context, we defined a class of qualitative control problems, and showed that were equivalent to some linear programming problems.\\
As in our previous work, we consider qualitative feedback laws, in the sense that they depend only on the box containing the state vector, rather than its exact value. This choice is motivated by robustness purposes. More pragmatically, it is also due to the fact that recent techniques allow for the observation of qualitative characteristics of biological systems, for instance by live imaging, using confocal microscopy, of GFP marker lines, where the measured state is closer to an ON/OFF signal than to a real number.\\
Recent experimental techniques allow furthermore for the reversible induction of specific genes at a chosen instant, for instance using promoters inducible by ethanol~\cite{deveaux03}, 
 or light \cite{shimizu02}, to name only two. Also, degradation rates may be modified, either directly by introducing a drug \cite{wyke06}, or via a designed genetic circuit \cite{grilly07}, which might be induced using previously mentioned techniques.\\
 
To simplify the presentation, we focus in this paper on the particular case where decay rates can be linearly controlled by a scalar and bounded input $u$. For each $ i\in\NN{n}$, let us denote this as:
\begin{equation}\label{eq-genenetu}
\frac{dx_i}{dt} = \kappa_i(x) - (\gamma_i^1(x)u+\gamma_i^0(x)) x,\quad u\in[0,U]\subset\mathbb{R}_+,
\end{equation}
where $\gamma_i^0$ and $\gamma_i^1$ are piecewise constant functions assumed to satisfy $\gamma_i^0>0$ and $\gamma_i^1>- \frac{\gamma_i^0}{U}$, in any box. This ensures that decay rates are positive, but yet can be decreasing functions of $u$ (for $\gamma_i^1<0$).\\
Now, a feedback law depending only on the qualitative state of the system is simply a expressed as the composite of a map $\bigcup_a\mathcal{D}_a\to\mathcal{A}$ indicating the box of the current state, with a function $u:\mathcal{A}\to[0,U]$ which represents the control law itself. In other words, in each box a constant input value is chosen. For a fixed law of this form, it is clear that the dynamics of (\ref{eq-genenetu}) is entirely determined, and in particular we denote its transition graph by $\mathsf{TG}(u)$.\\
Let us now recall our definition of control problem.\\[3mm]
{\bf Global Control Problem}: Let $\mathsf{TG}^\star =(\mathcal{A},\mathcal{E}^\star)$ be a transition graph. Find a feedback law $u:{\cal
A}\to[0,U]$ such that $\mathsf{TG}(u)=\mathsf{TG}^\star$.\\[3mm]
Clearly, $\mathcal{E}^\star$ cannot be arbitrary in $\mathcal{A}^2$, and must in particular contain only arrows of the form $(a,a\pm\mathrm{e}_i)$. Now in the present, restricted, context the equivalent linear programming problem described in \cite{FG08} is very simple. 
For each $a\in\mathcal{A}$, the control problem above requires that the focal point $\phi(a,u(a))$ belongs to a certain union of boxes, i.e. its coordinates must satisfy inequalities of the form $\theta_i^{j^-(a)}< \kappa_i(a)/ (\gamma_i^1(a)u(a)+\gamma_i^0(a)) <\theta_i^{j^+(a)}$, or equivalently
\begin{equation}\label{eq-u}
\frac{ \kappa_i(a)- \gamma_i^0(a)\theta_i^{j^+(a)}}{\gamma_i^1(a)\theta_i^{j^+(a)}}< u(a) <\frac{ \kappa_i(a)- \gamma_i^0(a)\theta_i^{j^-(a)}}{\gamma_i^1(a)\theta_i^{j^-(a)}}
\end{equation}
if $\gamma_i^1(a)>0$, and in reverse order otherwise. Hence, the solution set of the control problem is just the Cartesian product of all intervals of the form (\ref{eq-u}), when $a$ varies in $\mathcal{A}$. It is thus identical to a rectangle in $\mathbb{R}^{\#\mathcal{A}}$ (where $\#$ denotes cardinality), which is of full dimension if and only if the problem admits a solution.\\
Thanks to the explicit description (\ref{eq-u}),  this set can be computed with a complexity which is linear in $\#\mathcal{A}$. The latter grows exponentially with the dimension of the system, but in practice, one will face problems where $\mathcal{E}$ and $\mathcal{E}^\star$ only differ on a subset of initial vertices, say $\mathcal{A}^\star$, and then the actual complexity will be of order $\#\mathcal{A}^\star$.\\

In addition to this type of control, we introduce in this note some first hints toward dynamic feedback control, where instead of a direct feedback $u$, one uses some additional variable (here a single one), evolving in time according to a system of the form (\ref{eq-genenet}), and coupled to the initial system. This is suggested by the rectangular form of admissible inputs found in (\ref{eq-u}): instead of fixing an arbitrary value in a rectangle of an external input space, one increases the state space dimension, which has the effect of adding new boxes to the system. The dynamics of the supplementary variables is then defined by analogy with the direct feedback case: when the initial variables are in a box $a$, this makes additional variables tend to a box of the form (\ref{eq-u}).\\
This raises a number of questions, in large part due to the fact that instead of applying an input $u(a)$ instantaneously when entering box $\mathcal{D}_a$, the feedback now tends toward some value, which takes some time. Instead of fully developing a general theory, we thus have to chosen to illustrate it on a simple example, in section \ref{sec-ex1}.

\section{Examples}
\label{sec-ex}
We now illustrate with examples how it is possible to combine results of the two previous sections, and compute qualitative feedback laws ensuring (or precluding) the existence and uniqueness of oscillatory behaviour of a system of the form (\ref{eq-genenetu}).

\subsection{Example 1: disappearance of a limit cycle}\label{sec-ex1}

Consider the following two dimensional system:
\begin{equation}\label{eq-ex1}
\left\{
\begin{array}{l}
\dot x_1(t)  =  K_1{\sf s}^-(x_2)   -(\gamma_1^1 u+\gamma_1^0) x_1\\[2mm]
\dot x_2(t)  =  K_2[ {\sf s}^+(x_1,\theta_1^1){\sf s}^+(x_2) + {\sf s}^+(x_1,\theta_1^2){\sf s}^-(x_2)]  -\gamma_2^0 x_2
\end{array}
\right.
\end{equation}
where $x_2$ has a unique threshold, and ${\sf s}^\pm(x_2)={\sf s}^\pm(x_2,\theta_2^1)$. We assume moreover that the following inequalities stand:
\begin{equation}\label{eq-structconstex1}
\gamma_1^1>0,\quad K_1>\gamma_1^0\theta_1^2,\quad K_2>\gamma_2^0\theta_2^1,
\end{equation}
so that the first decay rate increases with $u$. Also, the interactions are {\it functional}: an activation of a variable leads to the corresponding focal point coordinate being above a variable's threshold (chosen as the highest one for $x_1$, since otherwise $\theta_1^2$ cannot be crossed from below). Remark that in this system, $x_2$ violates {\bf (H1)}. However, it will appear soon that this autoregulation is only effective at a single wall, which is unstable, and thus can be ignored safely.\\
This system corresponds to a negative feedback loop, where $x_2$ is moreover able to modulate its activation by $x_1$: when $x_2$ is above its threshold, the interaction is more efficient, since it is active at a lower threshold $\theta_1^1<\theta_1^2$. Biologically, this may happen if the proteins coded by $x_1$ and $x_2$ form a dimer, which activates $x_2$ more efficiently than $x_1$ protein alone. This is reminiscent of the {\it mixed feedback loop}, a very widespread module able to display various behaviours~\cite{mfl05}. It might be depicted by this graph
$$
\SelectTips{eu}{11}
\xymatrix @C=1.5cm{
   *+[o][F-] {1}  \ar@*{[|<.5pt>]}@/^.8pc/[r] _-{}="name" &
   *+[o][F-] {2}  \ar@*{[|<.5pt>]}@/^.8pc/@{-|}[l]  \ar@/_1pc/ "1,2";"name"
 }
$$ 

As seen in the equations, the scalar input is assumed to affect the first decay rate, but not the second (i.e. $\gamma_2^1=0$). Now, one readily computes the focal points of all boxes:
\begin{equation}\label{tab-focex1}\begin{array}{|c|c|c|c|c|c|}
\hline 
00 & 01& 10 & 11 & 20 & 21  \\
\hline 
\phi_1 & 0 &\phi_1 &0 &\phi_1 &0  \\
0 &0 &0 &\phi_2 & \phi_2 & \phi_2 \\
\hline 
\end{array}\end{equation}
where $\phi_1$ is an abbreviation for $K_1/(\gamma_1^1 u+\gamma_1^0)$, and $\phi_2$ for $ K_2/\gamma_2^0$. Under the constraints (\ref{eq-structconstex1}), this readily leads to the transition graph in absence of input (i.e. $u=0$ in all boxes):
$$
\mathsf{TG}(0)=\xymatrix {01  \ar@*{[|<1.5pt>]}[d] & 11\ar@*{[|<1.5pt>]}[l] \ar@{.}@*{[|<.5pt>]}[d] & 21\ar@*{[|<1.5pt>]}[l]\\
00  \ar@*{[|<1.5pt>]}[r] & 10\ar@*{[|<1.5pt>]}[r] & 20\ar@*{[|<1.5pt>]}[u]  }
$$
The dotted line represents an unstable wall, for which Filippov theory would be required for full rigour. However, this wall is not reachable, and we ignore it afterwards.\\
Now, since this graph has a cycle, the two thresholds $\theta_1^{1,2}$ are crossed, and (\ref{tab-focex1}) is easily seen to imply condition (\ref{eq-align}), conclusion $B)$ of Theorem \ref{thm-main} applies : there is a unique stable limit cycle.\\
Now, in accordance with the section's title, let us look for a $u$ leading to: $$
\mathsf{TG}^\star=\xymatrix {01  \ar@*{[|<1.5pt>]}[d] & 11\ar@*{[|<1.5pt>]}[l] \ar@{.}@*{[|<.5pt>]}[d] & 21\ar@*{[|<1.5pt>]}[l]\\
00  \ar@*{[|<1.5pt>]}[r] & *+[o][F-] {\mathbf{10}} & 20\ar@*{[|<1.5pt>]}[u] \ar@*{[|<1.5pt>]}[l] }$$
Clearly from $\mathsf{TG}^\star$, the box $\mathcal{D}_{10}$ attracts trajectories from all other boxes, and contains its own focal point, which is thus a globally asymptotically stable equilibrium. The only states whose successors differ in $\mathsf{TG}(0)$, and $\mathsf{TG}^\star$ are $10$ and $20$, hence we assume $u(a)=0$ for all other $a\in\mathcal{A}$, or $\mathcal{A}^\star=\{10,20\}$ to recall the notations of previous section. Then, Eq. (\ref{eq-u}) with $\theta_1^{j^-(a)}=\theta_1^1$ and $\theta_1^{j^+(a)}=\theta_1^2$ gives:
\begin{equation}\label{eq-solex1}
\frac{ K_1- \gamma_1^0\theta_1^{2}}{\gamma_1^1\theta_1^{2}}< u(a) <\frac{ K_1- \gamma_1^0\theta_1^{1}}{\gamma_1^1\theta_1^{1}}
\end{equation}
for both $a\in\mathcal{A}^\star$. This defines a nonempty interval by $\theta_1^1<\theta_1^2$, hence the Control Problem of previous section can be solve under constraints (\ref{eq-structconstex1}). An illustration on a numerical example is shown Figure~\ref{fig-ex1-2D}.\\
\begin{figure}
\centering
\includegraphics[height=6cm]{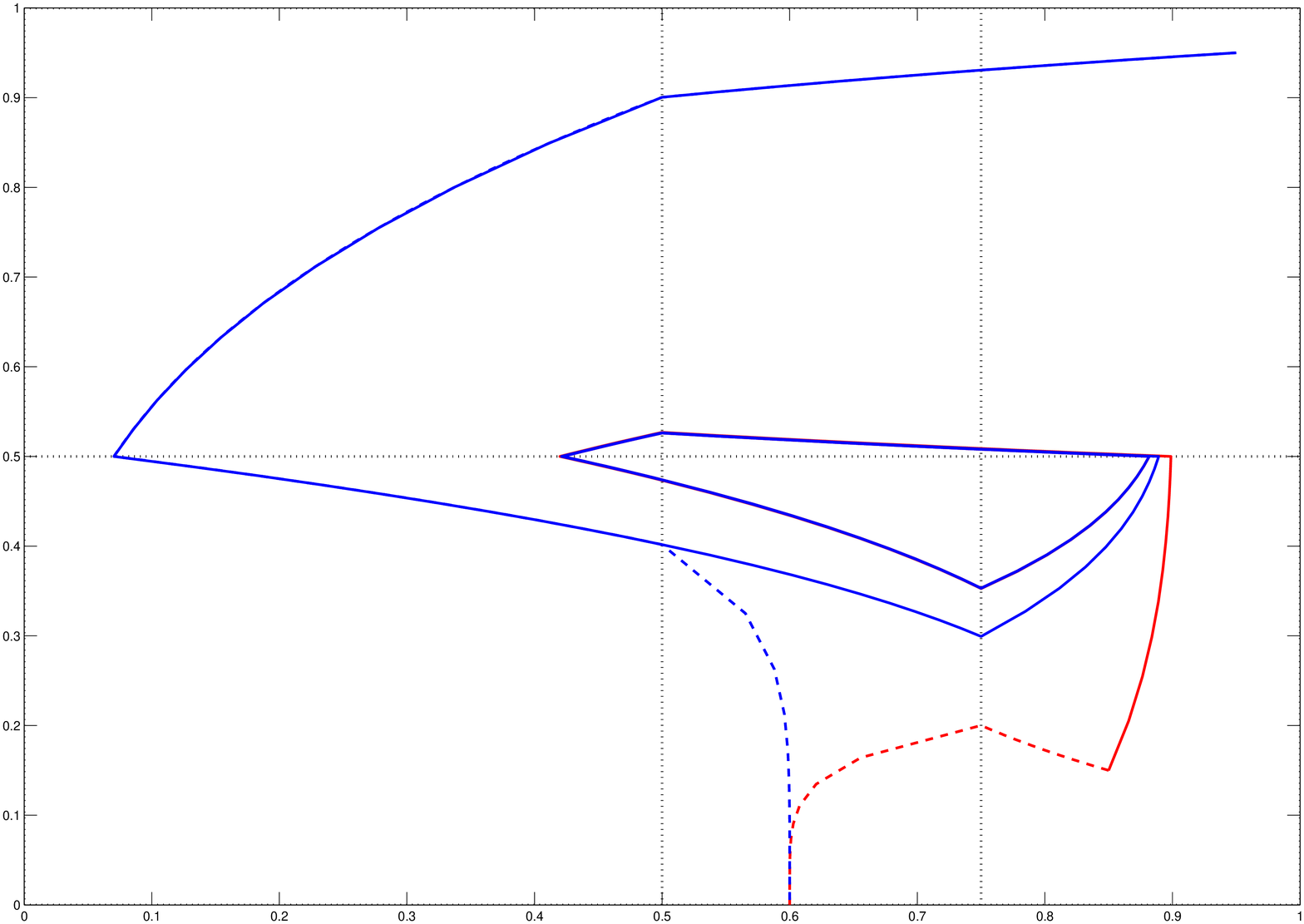}
\caption{\label{fig-ex1-2D}Dashed lines: with feedback control. Plain lines: without.  Two initial conditions, $(0.95,0.95)$ in box 21 (blue curves) and $(0.85,0.15)$ in 20 (red curves). The controlled and autonomous trajectories only diverge in box 10, 20  where the feedback is active. See parameters in Appendix \ref{sec-param-1}}
\end{figure}

Now, let us focus on the question of realising an extended network which solves the same problem, by adding a variable to system (\ref{eq-ex1}). In other words, one now seeks to impose the dynamics described by $\mathsf{TG}^\star$ using dynamic feedback. Biologically, this amounts to designing a genetic construct whose promoter depends transcriptionaly on $x_1$ and $x_2$, and increases the degradation rate of $x_1$. Let us denote by $y$ the expression level of this additional gene. The most obvious version of such an extended system arises by increasing $y$ production rate exactly at boxes in $\mathcal{A}^\star$:
\begin{equation}\label{eq-ex1bis}
\left\{
\begin{array}{l}
\dot x_1(t)  =  K_1{\sf s}^-(x_2)   -(\gamma_1^1\upsilon\,\mathsf{s}^+(y)+\gamma_1^0) x_1\\[2mm]
\dot x_2(t)  =  K_2 [ {\sf s}^+(x_1,\theta_1^1){\sf s}^+(x_2) + {\sf s}^+(x_1,\theta_1^2){\sf s}^-(x_2)]  -\gamma_2^0 x_2\\[2mm]
\dot y(t)  =  {\sf s}^+(x_1,\theta_1^1){\sf s}^-(x_2) -\gamma_y y
\end{array}
\right.
\end{equation}
$\upsilon$ a constant in the interval (\ref{eq-solex1}), so that forcing $s^+(y)=1$ would lead us back to a static feedback solution. This use of a single constant $\upsilon$ is possible in this particular example because constraints (\ref{eq-solex1}) are identical for the two boxes in $\mathcal{A}^\star$, but it should be noted that in general several constants might be required.\\
We consider without loss of generality that  $y\in[0,1/\gamma_y]$, since higher values of $y$ tend to $1/\gamma_y$ or $0$. Also, $\mathsf{s}^+(y)$ is defined with respect to a threshold $\theta_y\in(0,1)$. We also assume $\theta_y\gamma_y<1$, ensuring that $y$ may cross its threshold when activated.\\
Now, (\ref{eq-ex1bis}) defines an autonomous systems of the form (\ref{eq-genenet}), whose transition graph has indeed a fixed point $101$:
\begin{equation}\label{eq-tgy}\scalebox{.9}{
\xymatrix @ -.5cm {&011 \ar@*{[|<.5pt>]}'[d][dd]\ar@*{[|<1.5pt>]}[dl] && 111\ar@*{[|<1.5pt>]}[ll]  \ar@*{[|<.5pt>]}'[d][dd]  \ar@{.}@*{[|<.5pt>]}[dl]   && 211\ar@*{[|<.5pt>]}[dd] \ar@*{[|<1.5pt>]}[ll]\\
001 \ar@*{[|<.5pt>]}[dd]   \ar@*{[|<1.5pt>]}[rr] &&  *+[o][F-] {\bf 101}  && 201\ar@*{[|<1.5pt>]}[ll]  \ar@*{[|<1.5pt>]}[ur] &\\
&010  \ar@*{[|<1.5pt>]}[dl] && 110\ar@*{[|<1.5pt>]}'[l][ll]  \ar@{.}@*{[|<.5pt>]}[dl]   && 210  \ar@*{[|<1.5pt>]}'[l][ll]   \\
000  \ar@*{[|<1.5pt>]}[rr] && 100 \ar@*{[|<1.5pt>]}[rr]\ar@*{[|<.5pt>]}[uu] && 200\ar@*{[|<.5pt>]}[uu]\ar@*{[|<1.5pt>]}[ur]& 
}  }
\end{equation}
This fixed point corresponds the fixed point $10$ of $\mathsf{TG}^\star$: in fact, the upper part of the graph above, where $\mathsf{s}^+(y)=1$ is exactly $\mathsf{TG}^\star$. However, it is not invariant, and some trajectories can escape to $\mathsf{s}^+(y)=0$, where we see $\mathsf{TG}(0)$, and thus the possibility of periodic solutions. Besides, there are other cycles in this graph.\\
Unlike static feedback control -- and more realistically -- the effect of $y$ on $\gamma_1$ takes some positive time, explaining why the situation is not a direct translation of previous case. We will now show that under additional constraints of the parameters governing $y$'s dynamics, it is possible to guarantee that $\mathcal{D}_{101}$ contains a globally asymptotically stable equilibrium. To achieve this, let us rephrase a lemma, proved as Lemma 1 in \cite{farcot06}:
\begin{lemma}\label{lem}
For any box, there is at most one pair of parallel walls successively crossed by solution trajectories of a system of the form~(\ref{eq-genenet}).
\end{lemma}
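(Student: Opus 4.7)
The plan is to exploit the explicit in-box dynamics (\ref{eq-flow}) together with the coordinate-wise exit-time formula (\ref{eq-taui}) to characterize, for each outgoing direction of a box $\mathcal{D}_a$, when a trajectory can enter through one wall and exit through the parallel one. A pair of parallel walls in direction $i$ is "successively crossed" precisely when $i \in I_{out}(a)$ and some trajectory enters the wall on the side opposite the focal point and exits the opposite wall. So the task is to show that at most one such $i$ exists.

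For every outgoing coordinate $k \in I_{out}(a)$, denote by $\theta_k^{out}$ the wall in direction $k$ lying on the side of the focal point, and by $\theta_k^{in}$ the opposite parallel wall. The main tool will be the scalar
$$\rho_k \;=\; \left(\frac{|\phi_k - \theta_k^{out}|}{|\phi_k - \theta_k^{in}|}\right)^{1/\gamma_k} \;\in\; (0,1).$$
A direct substitution in (\ref{eq-taui}) shows that a trajectory starting at any point $x$ with $x_k = \theta_k^{in}$ reaches the parallel wall in direction $k$ at time exactly $-\ln \rho_k$, while for $x_k$ strictly interior to $(\theta_k^-, \theta_k^+)$ this exit time is strictly smaller, tending to $-\ln \rho_k$ only as $x_k \to \theta_k^{in}$.

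Fix $i \in I_{out}(a)$ and consider any trajectory entering at a point $(\theta_i^{in}, x_{-i})$ with $x_{-i}$ in the interior of the incoming wall. Its $i$-th exit time is fixed at $-\ln \rho_i$, and by the rule $\tau(x) = \min_l \tau_l(x)$ governing (\ref{eq-maptrans}), the trajectory exits in direction $i$ rather than any other outgoing direction if and only if $-\ln \rho_i < \tau_l(x_l)$ for every $l \in I_{out}(a) \setminus \{i\}$. Since $\tau_l(x_l)$ ranges over $(0, -\ln \rho_l)$ as $x_l$ varies over the interior of its range, an interior $x_{-i}$ satisfying all of these inequalities can be chosen if and only if the strict inequality $\rho_i > \rho_l$ holds for every $l \in I_{out}(a) \setminus \{i\}$.

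Thus a pair of parallel walls in direction $i$ is successively crossed by a nonempty family of trajectories precisely when $\rho_i$ is the strict maximum of the finite family $\{\rho_l : l \in I_{out}(a)\}$, and at most one element of such a family can be a strict maximum, giving the conclusion. The only genuinely delicate step is the third one: one must take care to distinguish trajectories starting in the interior of a wall (where the $\tau_l$-inequalities are strict) from exceptional boundary/corner trajectories, where ties in $\rho$ might produce isolated solutions simultaneously reaching two parallel pairs; such corner trajectories form a measure-zero set and do not contribute to the transition map between the (open) walls considered here.
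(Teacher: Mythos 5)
Your proof is correct and follows essentially the route the paper relies on: the paper states the lemma without proof (citing Lemma~1 of \cite{farcot06}) but records the characterizing condition (\ref{eq-lem1}), and your criterion that $\rho_i$ be the strict maximum of $\{\rho_l\}_{l\in I_{out}(a)}$ is exactly that condition, since $-\ln\rho_l=\tau_l(\theta_l^{-})$ is the maximal traversal time in direction $l$. The impossibility of two strict maximizers then gives the claim, as in the cited source.
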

In other words, there is at most one direction $i$ such that opposite walls, of the form $x_i=\theta_i^{-}$ and $x_i=\theta_i^{+}$, are crossed. Moreover, such an $i$ is characterised, see \cite{farcot06}, by the condition
\begin{equation}\label{eq-lem1}
\forall j\neq i, \quad \tau_i(\theta_i^{-})<\tau_j(\theta_j^{-}),
\end{equation}
under the assumption $I_{out}=I_{out}^+$ (which simplifies the description without loss of generality), i.e. all exiting walls occur at higher threshold values, of the form $\theta_i^{+}$, which is thus the threshold involved in the definition of $\tau_i$, Eq. (\ref{eq-taui}). This allows us to prove the following result:
\begin{prop}
\label{prop-ex1}
Suppose that the parameters of (\ref{eq-ex1bis}) satisfy, denoting $\phi_1=\frac{K_1}{\gamma_1^0}$:
$$
\left(1-\gamma_y\theta_y\right)^{\frac{1}{\gamma_y}} > \left(\frac{\phi_1-\theta_1^2}{\phi_1-\theta_1^1}\right)^{\frac{1}{\gamma_1^0}}
$$
Then there the steady state in box $\mathcal{D}_{101}$ attracts the whole state space of system (\ref{eq-ex1bis}).
\end{prop}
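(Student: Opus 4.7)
The plan is to show that the focal point of $\mathcal{D}_{101}$ is a globally attracting equilibrium. First note that under the choice of $\upsilon\in$ (\ref{eq-solex1}) the focal point of $\mathcal{D}_{101}$ has $x_1$-coordinate in $(\theta_1^1,\theta_1^2)$, $x_2$-coordinate $0<\theta_2^1$, and $y$-coordinate $1/\gamma_y>\theta_y$. Hence $\mathcal{D}_{101}$ is positively invariant and contains a single asymptotically stable equilibrium, so it suffices to prove that every orbit reaches $\mathcal{D}_{101}$ in finite time.

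The heart of the argument is the analysis of $\mathcal{D}_{100}$. A direct inspection of (\ref{eq-tgy}) shows that $\mathcal{D}_{100}$ has a single incoming transition, $\mathcal{D}_{000}\to\mathcal{D}_{100}$ through the wall $x_1=\theta_1^1$: the $x_1$-flow in $\mathcal{D}_{200}$, the $x_2$-flow in $\mathcal{D}_{110}$, and the $y$-flow in $\mathcal{D}_{101}$ all point away from $\mathcal{D}_{100}$. Consequently, any trajectory visiting $\mathcal{D}_{100}$ after its initial segment enters with $x_1(0)=\theta_1^1$ and some $y_0\in[0,\theta_y)$. Using (\ref{eq-taui}), the two candidate exit times are
$$
\tau_{x_1}=-\frac{1}{\gamma_1^0}\ln\frac{\phi_1-\theta_1^2}{\phi_1-\theta_1^1},\qquad
\tau_y(y_0)=-\frac{1}{\gamma_y}\ln\frac{1-\gamma_y\theta_y}{1-\gamma_y y_0}.
$$
Since $\tau_y(y_0)$ is maximised at $y_0=0$, the hypothesis of the proposition, after exponentiation and rearrangement, is exactly $\tau_y(0)<\tau_{x_1}$. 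Equivalently, by the characterisation (\ref{eq-lem1}) of Lemma~\ref{lem}, the pair of parallel walls $\{x_1=\theta_1^1,\,x_1=\theta_1^2\}$ of $\mathcal{D}_{100}$ cannot be successively crossed. Hence $\tau_y(y_0)\leqslant\tau_y(0)<\tau_{x_1}$, so every such trajectory exits $\mathcal{D}_{100}$ through $y=\theta_y$ into $\mathcal{D}_{101}$, where it converges to the focal point.

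It remains to verify that every orbit either enters $\mathcal{D}_{101}$ directly or reaches $\mathcal{D}_{100}$ via $\mathcal{D}_{000}$. In the lower layer, $y$ strictly decreases in $\mathcal{D}_{000},\mathcal{D}_{010},\mathcal{D}_{110},\mathcal{D}_{210}$ (the $y$-production term vanishes), so no upward crossing of $\theta_y$ can occur there; meanwhile the $(x_1,x_2)$-projection is governed by the autonomous system (\ref{eq-ex1}) with $u=0$, whose unique stable limit cycle (given by Theorem~\ref{thm-main} applied to the cycle $\{00,10,20,21,11,01\}$) forces every orbit to traverse the loop $\mathcal{D}_{100}\to\mathcal{D}_{200}\to\mathcal{D}_{210}\to\mathcal{D}_{110}\to\mathcal{D}_{010}\to\mathcal{D}_{000}\to\mathcal{D}_{100}$, so a passage $\mathcal{D}_{000}\to\mathcal{D}_{100}$ eventually occurs. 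The upper-layer boxes different from $\mathcal{D}_{101}$ either send the flow directly to $\mathcal{D}_{101}$ (as in $\mathcal{D}_{201}$ and $\mathcal{D}_{001}$) or drop through $y=\theta_y$ back to the lower layer, after which the previous argument applies.

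The main delicate point I expect is the treatment of initial conditions inside $\mathcal{D}_{100}$ with $x_1(0)\in(\theta_1^1,\theta_1^2)$, for which the entry-wall hypothesis $x_1(0)=\theta_1^1$ underlying the key exit-time estimate is not satisfied. Such a trajectory may indeed escape directly into $\mathcal{D}_{200}$, but the transition-graph walk above shows that after at most one lower-layer loop it re-enters $\mathcal{D}_{100}$ through $\mathcal{D}_{000}$, at which point the core estimate applies and forces the escape into $\mathcal{D}_{101}$. This bookkeeping is where Theorem~\ref{thm-main}~B is essential, since it prevents the emergence of any spurious attractor trapping the $(x_1,x_2)$-projection away from the cycle and thus away from a clean re-entry through $\mathcal{D}_{000}$.
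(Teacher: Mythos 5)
Your proof is correct and follows essentially the same route as the paper: the decisive step is the exit-time comparison in box $100$ (your explicit inequality $\tau_y(y_0)\leqslant\tau_y(0)<\tau_{x_1}$ is exactly the condition (\ref{eq-lem1}) of Lemma~\ref{lem} that the paper invokes), combined with the observation that any trajectory which does not enter $\mathcal{D}_{101}$ must re-enter $\mathcal{D}_{100}$ through the wall $x_1=\theta_1^1$. The only differences are cosmetic: you replace the paper's argument that limit sets lie in strongly connected components of (\ref{eq-tgy}) and that every cycle there passes through $100$ by an explicit walk through the graph, and your appeal to Theorem~\ref{thm-main} in the last paragraph is superfluous, since the deterministic box sequence of $\mathsf{TG}(0)$ (each lower-layer box having a unique successor and no internal focal point) already forces the eventual passage $000\to100$.
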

\begin{proof}
Since each box is either containing an asymptotic steady state, or has all its trajectories escaping it toward a focal point, all limit set must be contained in a strongly connected component of the transition graph, i.e. a collection of cyclic paths sharing some vertices.
A visual inspection of the transition graph displayed in (\ref{eq-tgy}) shows that any cyclic path in the transition graph $\mathsf{TG}$ must visit the state $100$. This state has only two successors: $200$ and $101$, the fixed state. Hence it has two exit walls, which we denote by: $W_1^+=\{\theta_1^2\}\times (\theta_2^0,\theta_2^1)\times (0,\theta_y)$ and $W_y^+=(\theta_1^1,\theta_1^2)\times (\theta_2^0,\theta_2^1)\times\{\theta_y\}$. All other walls are incoming. Denoting them by obvious analogy with the two exiting walls, let us consider each of them. First, both walls $W_2^\pm$ are repelling: this has already been said for $W_2^+$ when discussing auto-regulatory terms in (\ref{eq-ex1}). For $W_2^-$, this follows from the fact that $\mathcal{D}_{100}$ contains only trajectories escaping in finite time, and can be extended to this wall by continuity. Moreover, it follows from $\mathsf{TG}$ that any trajectory escaping $W_2^\pm$ either reaches $\mathcal{D}_{101}$ (where the fixed point lies), or enters $\mathcal{D}_{100}$ again via the wall $W_1^-$. Thus, any trajectory which does not enter $\mathcal{D}_{101}$ must cross the pair $W_1^\pm$ in succession. Now, from Lemma~\ref{lem}, among the two pairs of walls $W_1^\pm$ , $W_y^\pm$, only one can be crossed in succession by trajectories. Moreover, the inequality in the statement is the exact translation of the condition (\ref{eq-lem1}), in the case where $W_y^\pm$ is the crossed pair of walls, precluding any attractor but the known fixed point, $\phi(101)$.
\end{proof}

Some elementary calculus shows that the left-hand side in the inequality of proposition~\ref{prop-ex1} is a increasing function of $\gamma_y$ when $\gamma_y\in(0,1/\theta_y)$, as assumed previously. Thus, this inequality is equivalent to requiring a lower bound to $\gamma_y$, eventhough this bound does not have a simple explicit form. \\
This fact can be given an intuitive explanation: $\gamma_y$ is inversely proportional to the characteristic time of the variable $y$, in each box. Hence, proposition~\ref{prop-ex1} means that the dynamics of $y$ must be fast enough in order to retrieve the behaviour of the static feedback control, which corresponds to the limit of an instantaneous feedback. See Figure~\ref{fig-ex1-3D} for a numerical example.
\begin{figure}
\hspace*{-.5cm}\includegraphics[height=7.5cm,width=9.5cm]{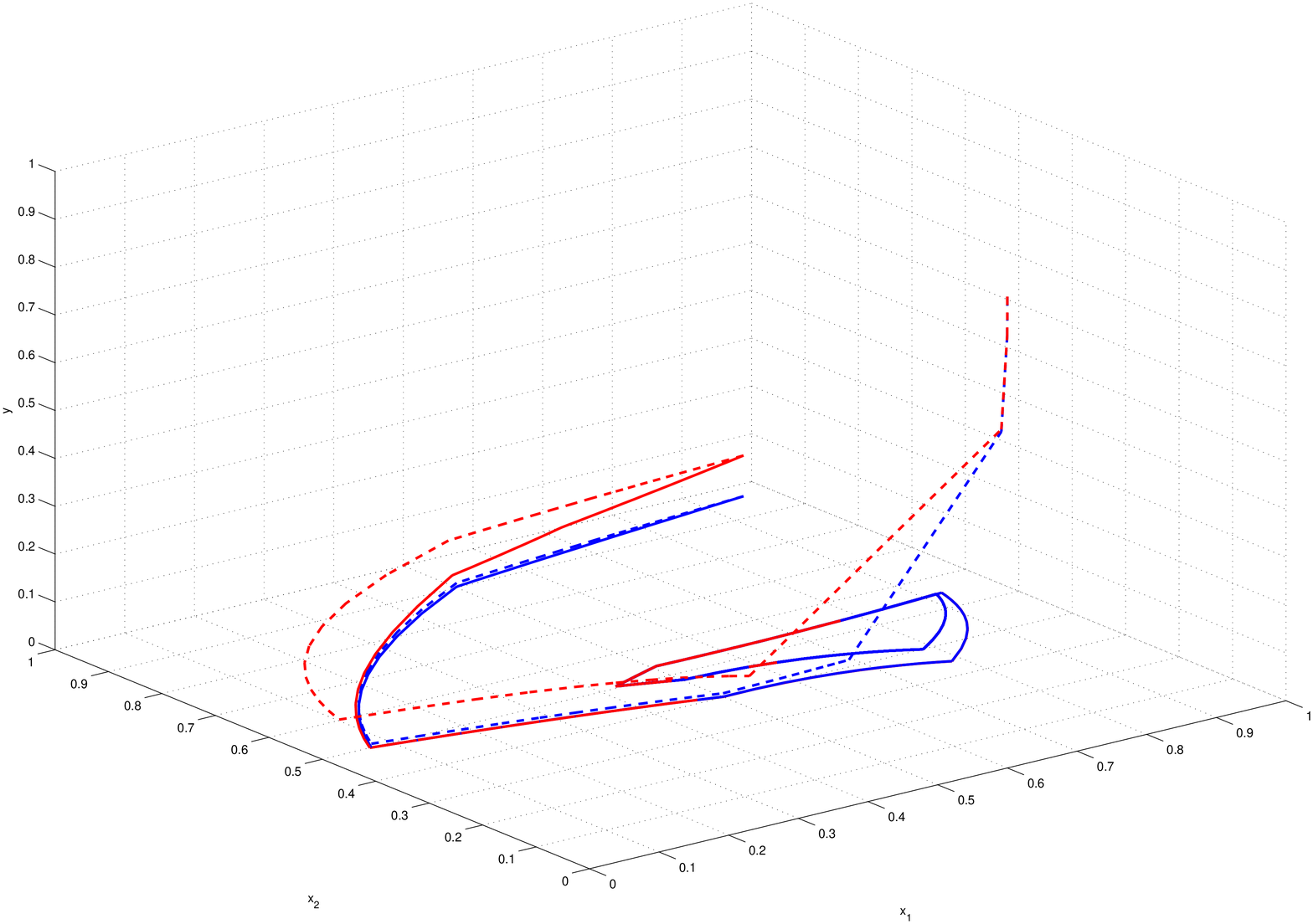}
\caption{\label{fig-ex1-3D}Dashed line: inequality of proposition \ref{prop-ex1} satisfied. Plain line: inequality violated. Two common initial conditions, $(x_1,x_2,y)=(0.95;0.95,0.1)$ (blue) and $(0.95,0.95,0.95)$ (red). The value of $y$ has been divided by $10$ to keep all variables in $[0,1]$. In both cases a limit cycle is controlled into an equilibrium point. See parameters in Appendix \ref{sec-param-2}}
\end{figure}

The results of this section can be summarised as
\begin{prop}\label{mainprop-ex1}
A system of the form (\ref{eq-ex1}), with structural constraints (\ref{eq-structconstex1}), has a unique, stable and globally attractive limit cycle in absence of input, i.e. $u=0$.\\
There exists a control law ensuring a unique, stable and globally attractive equilibrium point. This control can be achieved in two ways:\\
\hspace*{3mm}$\bullet)$ Using a scalar piecewise constant feedback $u$, such that $u(a)$ satisfies (\ref{eq-solex1}) for $a\in\{10,20\}$.\\
\hspace*{3mm}$\bullet)$ Using dynamic feedback with a single additional variable $y$, as in (\ref{eq-ex1bis}), whose decay rate satisfies the condition in proposition \ref{prop-ex1}, and with $\upsilon$ a solution of (\ref{eq-solex1}).
\end{prop}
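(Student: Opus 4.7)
The plan is to assemble three results that have already been established in the section, one for each bullet of the statement.

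For the claim with $u=0$, I would substitute $u=0$ into the focal point table (\ref{tab-focex1}), read off the transition graph $\mathsf{TG}(0)$ already displayed in the text, and verify three things in order: that the whole state space consists of the single cycle $\mathcal{C} = 00 \to 10 \to 20 \to 21 \to 11 \to 01 \to 00$ (so no trajectories leak out of $\mathcal{C}$); that the focal points along $\mathcal{C}$ differ in at most one coordinate per transition, so the alignment condition (\ref{eq-align}) holds (boxes 00 and 10 share the same focal point $(\phi_1,0)$, boxes 10 and 20 differ only in the $x_2$-coordinate, etc.); and that the two distinct thresholds $\theta_1^1$ and $\theta_1^2$ are both crossed in the $x_1$ direction. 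The last point places us in case $B)$ of Theorem~\ref{thm-main}, which yields a unique attracting fixed point of the first-return map, hence a unique stable limit cycle attracting all initial conditions in $\mathcal{C}$. Since $\mathcal{C}$ covers every regular box, global attractivity on the whole state space follows. The unstable wall noted in the text is repelling and Lebesgue-negligible, so it does not affect this conclusion.

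For the static feedback bullet, I would apply the constraint (\ref{eq-solex1}) at $a\in\{10,20\}$, which ensures $\theta_1^1 < \kappa_1/(\gamma_1^1 u(a)+\gamma_1^0) < \theta_1^2$, i.e., the controlled focal-point coordinate $\phi_1(a,u(a))$ lies in the open interval $(\theta_1^1,\theta_1^2)$. Keeping $u=0$ elsewhere does not modify the other focal points. Recomputing the transitions from boxes $10$ and $20$ then yields exactly $\mathsf{TG}^\star$, in which $\mathcal{D}_{10}$ contains its own focal point and is the unique sink. Global asymptotic stability of $\phi(10,u(10))$ follows because every other box has its focal point either in $\mathcal{D}_{10}$ or on a descent path toward it in $\mathsf{TG}^\star$, and within $\mathcal{D}_{10}$ the affine flow (\ref{eq-flow}) converges to the interior focal point.

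For the dynamic feedback bullet, the conclusion is an immediate restatement of Proposition~\ref{prop-ex1}: under the displayed decay-rate inequality, $\mathcal{D}_{101}$ contains a globally attractive equilibrium of the augmented system (\ref{eq-ex1bis}); the choice of $\upsilon$ in the interval (\ref{eq-solex1}) was what tied the augmented system's steady state in $\mathcal{D}_{101}$ back to the static solution.

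The only step that requires any real argument is the very first one, namely ruling out other limit sets once the cycle $\mathcal{C}$ is identified; here, however, the exhaustive (six-box) enumeration and the absence of edges out of $\mathcal{C}$ make this immediate, so I expect no substantive obstacle — the proof is essentially a bookkeeping exercise combining Theorem~\ref{thm-main}$B)$, the computation of $\mathsf{TG}^\star$ from (\ref{eq-solex1}), and Proposition~\ref{prop-ex1}.
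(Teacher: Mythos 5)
Your proposal is correct and follows essentially the same route as the paper, which states Proposition~\ref{mainprop-ex1} as a summary of the arguments already developed in Section~\ref{sec-ex1}: Theorem~\ref{thm-main}$B)$ applied to the six-box cycle of $\mathsf{TG}(0)$, the interval (\ref{eq-solex1}) producing $\mathsf{TG}^\star$ with $\mathcal{D}_{10}$ as the unique sink containing its focal point, and Proposition~\ref{prop-ex1} for the dynamic feedback. Your explicit remark that successive focal points along the cycle differ in \emph{at most} one coordinate (e.g.\ $\phi(00)=\phi(10)$) is in fact a slightly more careful reading of condition (\ref{eq-align}) than the paper's, and is the correct one for Theorem~\ref{thm-main} to apply.
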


\subsection{Example 2 : birth of a limit cycle}
Let us now consider the following system
\begin{equation}\label{eq-ex2}
\left\{
\begin{array}{l}
\dot x_1(t)  =  K_1\: {\sf s}^+(x_2)- (\gamma_1^1u+\gamma_1^0)\: x_1\\
\dot x_2(t)  =  K_2^3\: {\sf s}^-(x_3)+ K_2^1\:{\sf s}^-(x_1,\theta_1^2) - (\gamma_2^1u+\gamma_2^0)\: x_2\\
\dot x_3(t)  =  K_3\:{\sf s}^+(x_1,\theta_1^1) - (\gamma_3^1u+\gamma_3^0)\: x_3\\
\end{array}\right.
\end{equation}
where ${\sf s}^+(x_i)$ abbreviates ${\sf s}^+(x_i,\theta_i^1)$ for $i=2,3$. We assume the following constraints  to be satisfied
\begin{equation}\label{eq-structconstex2}
\left\{\begin{array}{l}
\gamma_i^1>0,\;\; i=1,2,3,\quad K_1\;>\theta_1^2\:\gamma_1^0\;> \theta_1^1\:\gamma_1^0\\[1mm]
 K_3>\theta_3\gamma_3^0,\quad K_2^i>\theta_2\gamma_2^0,\; i=1,3.
\end{array}\right.
\end{equation}
This system is a particular case of two combined negative feedback loops, of the form:
$$\scalebox{.7}{
\SelectTips{eu}{11}\xymatrix @ -.5pc {*+[o][F-] {3}  \ar@<1mm>@*{[|<.5pt>]}@/^.8pc/@{-|}[rr]   && *+[o][F-] {2} \ar@<1mm>@*{[|<.5pt>]}@/^.8pc/[dl] \\
 & *+[o][F-] {1}\ar@<1mm>@*{[|<.5pt>]}@/^.8pc/[ul] \ar@<1mm>@*{[|<.5pt>]}@/^.8pc/@{-|}[ur]&}
}$$
Since the behaviour of a single loop is well characterised by theorem~\ref{thm-negloop}, it can be considered as one of the simplest systems whose behaviour might be worth investigating.\\
Computing the focal points of all boxes, with the abbreviations $\phi_i=K_i/(\gamma_i^1u+\gamma_i^0)$ (with additional superscripts to $\phi_i$ and $K_i$ for $i=2$) and  $\phi_2^+=\phi_2^1+\phi_2^3$, leads to the following table:\\[2mm]
\scalebox{.9}{$
\begin{array}{|c|c|c|c|c|c|c|c|c|c|c|c|}
\hline
000 & 100 & 200& 010& 110& 210& 001& 101& 201& 011& 111& 211 \\
\hline 
0    &0    &0    &\phi_1&\phi_1&\phi_1&0&0      &0    &\phi_1&\phi_1&\phi_1 \\
\phi_2^+&\phi_2^+&\phi_2^3    &\phi_2^+&\phi_2^+&\phi_2^3    &\phi_2^1&\phi_2^1 &0 &\phi_2^1 &\phi_2^1 &0    \\
0    &\phi_3&\phi_3&0    &\phi_3&\phi_3&0&\phi_3&\phi_3&0     &\phi_3&\phi_3\\
\hline
\end{array}
$}

Under the indicated parameter constraints, the following transition graph is easily deduced:
\begin{equation}
\mathsf{TG}(0)=\scalebox{.8}{\xymatrix @ -.5cm {&011  \ar@*{[|<1.5pt>]}[rr]\ar@*{[|<1.5pt>]}'[d][dd]&& 111  \ar@*{[|<1.5pt>]}[rr]   && 211\ar@*{[|<1.5pt>]}[dl]\\
001  \ar@*{[|<1.5pt>]}[dd] \ar@*{[|<1.5pt>]}[ur]&& 101\ar@*{[|<1.5pt>]}[ur]\ar@*{[|<1.5pt>]}[ll]  && 201\ar@*{[|<1.5pt>]}[ll]  &\\
&010  \ar@*{[|<1.5pt>]}'[r][rr] && 110\ar@*{[|<1.5pt>]}'[u][uu] \ar@*{[|<1.5pt>]}'[r][rr] && 210  \ar@*{[|<1.5pt>]}[uu]  \ar@*{[|<1.5pt>]}[dl] \\
000  \ar@*{[|<1.5pt>]}[ur] && 100 \ar@*{[|<1.5pt>]}[ll]\ar@*{[|<1.5pt>]}[uu]\ar@*{[|<1.5pt>]}[ur]   && 200\ar@*{[|<1.5pt>]}[uu]\ar@*{[|<1.5pt>]}[ll]& 
}  }
\end{equation}
The region with bold arrows -- i.e. the whole graph in this case -- is invariant, and we see that depending on the parameter values, the actual solutions of (\ref{eq-ex2}) may have various behaviours: to each periodic path in $\mathsf{TG}(0)$, a stable periodic orbit may possibly correspond, and there is an infinity of such paths. Some examples have already been provided of such situations, where periodic paths of arbitrary length can be realised as stable limit cycles, by suitable choice of parameters~\cite{gedeon03}. Although it does not present a fixed box, it may also have a stable equilibrium, limit of damped oscillations, as will be illustrated soon.\\
In order to guarantee that the system oscillates, we fix the following objective:
\begin{equation}
\mathsf{TG}^\star=\scalebox{.8}{\xymatrix @ -.5cm {&011  \ar@*{[|<.5pt>]}[rr]\ar@*{[|<.5pt>]}'[d][dd]\ar@*{[|<.5pt>]}[dl] && 111\ar@*{[|<1.5pt>]}[dl]  && 211\ar@*{[|<.5pt>]}[dl] \ar@*{[|<.5pt>]}[ll]  \\
001  \ar@*{[|<1.5pt>]}[dd] && 101\ar@*{[|<1.5pt>]}[ll]  && 201\ar@*{[|<.5pt>]}[ll]  &\\
&010  \ar@*{[|<1.5pt>]}'[r][rr] && 110\ar@*{[|<1.5pt>]}'[u][uu]  && 210 \ar@*{[|<.5pt>]}'[l][ll] \ar@*{[|<.5pt>]}[uu]  \\
000  \ar@*{[|<1.5pt>]}[ur] && 100 \ar@*{[|<.5pt>]}[ll]\ar@*{[|<.5pt>]}[uu]\ar@*{[|<.5pt>]}[ur]   && 200\ar@*{[|<.5pt>]}[uu]\ar@*{[|<.5pt>]}[ll] \ar@*{[|<.5pt>]}[ur]& 
}  }
\end{equation}
We now see that the invariant region in bold is a cycle with no escaping edge. Furthermore, it lies in the region where $\mathsf{s}^-(x_1,\theta_1^2)=1$, and it appears from (\ref{eq-ex2}) that the system in this region is a negative feedback loop involving the three variables. Hence, from theorem~\ref{thm-negloop}, we can conclude that there exists a unique stable limit cycle, which is globally attractive, as deduced from $\mathsf{TG}^\star$. Yet, it remains to state the inequalities defining this graph. They follow from the inversion of arrows in contact with some $a\in\mathcal{A}^\star=\{110,210,111,211, 001,101,011\}$, which leads to
$$
\left\{\begin{array}{lclcl}
\theta_1^2\:(\gamma_1^0+u\gamma_1^1) & > & K_1 & > & \theta_1^1\:(\gamma_1^0+u\gamma_1^1)\\
  K_2^1+K_2^3&>& K_2^3 & > & \theta_2^1\:(\gamma_2^0+u\gamma_2^1) \\
&& K_2^1 & < & \theta_2^1\:(\gamma_2^0+u\gamma_2^1) \\
&& K_3 & > & \theta_3^1\:(\gamma_3^0+u\gamma_3^1) 
\end{array}\right.
$$
This system, following (\ref{eq-u}), can be reduced to:
\begin{equation}\label{eq-solex2}
\begin{array}{ll}
\displaystyle\max \left\{\frac{ K_1- \gamma_1^0\theta_1^{2}}{\gamma_1^1\theta_1^{2}},\frac{ K_2^1- \gamma_2^0\theta_2^{1}}{\gamma_2^1\theta_2^{1}}\right\}< u(a) &\\
\displaystyle< \min \left\{\frac{ K_2^3- \gamma_2^0\theta_2^{1}}{\gamma_2^1\theta_2^{1}}, \frac{ K_1- \gamma_1^0\theta_1^{1}}{\gamma_3^1\theta_3^{1}},\frac{ K_3- \gamma_3^0\theta_3^{1}}{\gamma_3^1\theta_3^{1}}\right\}&
\end{array}\end{equation}
for $a\in\mathcal{A}^\star$. The problem is thus reduced to the satisfiability of the inequality between the two extreme terms above. This fact holds for some parameter values satisfying constraints (\ref{eq-structconstex2}), and the results of this section can be summarised as
\begin{prop}\label{mainprop-ex2}
A system of the form (\ref{eq-ex2}), with structural constraints (\ref{eq-structconstex2}), may present a large variety of asymptotic behaviours without input, i.e. when $u=0$. This includes steady states, as shown Figures \ref{fig-ex2-1d} and \ref{fig-ex2-3d} , as well as limit cycles (not shown).\\
If $u$ is a scalar piecewise constant feedback, such that $u(a)$ satisfies (\ref{eq-solex2}) for $a\in\mathcal{A}^\star$, and $u(a)=0$ elsewhere, then there exists a unique, stable and globally attractive limit cycle.
\end{prop}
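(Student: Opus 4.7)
The plan is to handle the two parts of the statement separately. The assertion about the autonomous system ($u=0$) is essentially descriptive: the transition graph $\mathsf{TG}(0)$ displayed above contains no box whose focal point lies inside, yet admits many cyclic paths, so the possible asymptotic behaviours range from damped oscillations converging to a singular equilibrium (as in Figures~\ref{fig-ex2-1d} and \ref{fig-ex2-3d}) to genuine limit cycles realising various cyclic paths, whose existence for suitable parameters is documented in \cite{gedeon03}. Nothing beyond these references is needed for the first sentence.

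The controlled assertion is proved in three steps. First, I would check that plugging the prescribed feedback into the focal-point table yields exactly $\mathsf{TG}^\star$: for every $a\in\mathcal{A}^\star$, the desired position of $\phi(a,u(a))$ relative to the thresholds translates, through (\ref{eq-u}), into the inequalities collected in (\ref{eq-solex2}); for $a\notin\mathcal{A}^\star$, setting $u(a)=0$ preserves the arrows of $\mathsf{TG}(0)$, which coincide with those of $\mathsf{TG}^\star$ outside $\mathcal{A}^\star$. Second, I would identify the bold subgraph of $\mathsf{TG}^\star$ as an invariant cyclic sequence $\mathcal{C}$ of regular domains lying entirely in the region $\mathsf{s}^-(x_1,\theta_1^2)=1$. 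On this region the term $K_2^1\mathsf{s}^-(x_1,\theta_1^2)$ is constant, and the system (\ref{eq-ex2}) reduces along $\mathcal{C}$ to a three-dimensional negative feedback loop matching the hypotheses of Theorem~\ref{thm-negloop}; reading the focal-point table restricted to $\mathcal{C}$ confirms that exactly one coordinate of $\phi$ changes at each transition, so the alignment condition (\ref{eq-align}) holds.

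Third, Theorem~\ref{thm-negloop} applied with $n=3$ places us in case~A.\textbf{ii)} of Theorem~\ref{thm-main}, which supplies a unique non-trivial fixed point $q$ of the first-return map on any wall of $\mathcal{C}$, attracting every other point of that wall; this is the desired stable limit cycle inside $\mathcal{C}$. Global attractivity follows from the remaining structure of $\mathsf{TG}^\star$: no box outside $\mathcal{C}$ contains its own focal point (by direct inspection of the focal-point table under the bounds (\ref{eq-solex2})) and every directed path from such a box eventually enters $\mathcal{C}$, so every trajectory reaches $\mathcal{C}$ in finite time and then converges to the limit cycle.

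The main obstacle is the first step: the edge-by-edge verification that (\ref{eq-solex2}), together with the vanishing of $u$ outside $\mathcal{A}^\star$, realises $\mathsf{TG}^\star$ without inadvertently creating or destroying arrows elsewhere. This is a bookkeeping exercise but a delicate one, since each component of $u(a)$ constrains several arrows at once and must be shown jointly satisfiable under the structural constraints (\ref{eq-structconstex2}). Once this is done, the remaining two steps are direct applications of the results recalled in Section~\ref{sec-cyc}.
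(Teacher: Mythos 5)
Your proposal follows the paper's own argument essentially verbatim: the first assertion is handled descriptively (with \cite{gedeon03} justifying that various periodic paths can be realised), and the controlled case is proved by checking that (\ref{eq-solex2}) realises $\mathsf{TG}^\star$, observing that the invariant bold cycle lies in the region where $\mathsf{s}^-(x_1,\theta_1^2)=1$ so that (\ref{eq-ex2}) reduces there to a three-dimensional negative feedback loop, and invoking Theorem~\ref{thm-negloop} together with the funnelling structure of $\mathsf{TG}^\star$ for global attractivity. The additional details you supply (explicit verification of the alignment condition (\ref{eq-align}) and of the absence of cycles outside $\mathcal{C}$) are consistent with, and implicit in, the paper's more terse treatment.
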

\begin{figure}
\centering
\includegraphics[width=8cm,height=8cm]{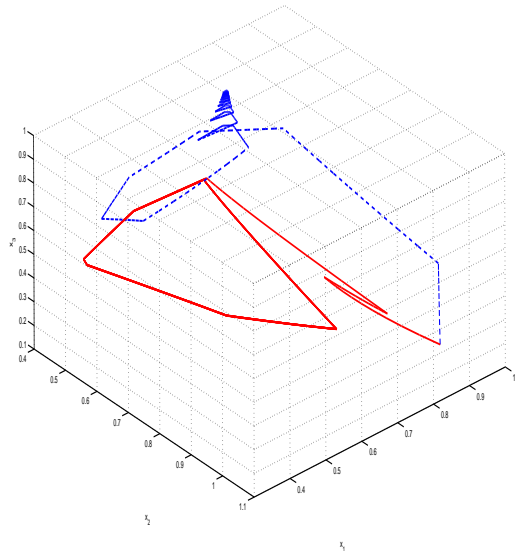}
\caption{\label{fig-ex2-1d}Blue, dashed line: without feedback control, spirals towards a fixed points. Red, plain line: with control, tends to a limit cycle. Common initial condition $(x_1,x_2,x_3)=(0.95,0.95,0.1)$. See parameters in Appendix \ref{sec-param-3}}
\end{figure}
\begin{figure}
\includegraphics[width=8cm,height=8cm]{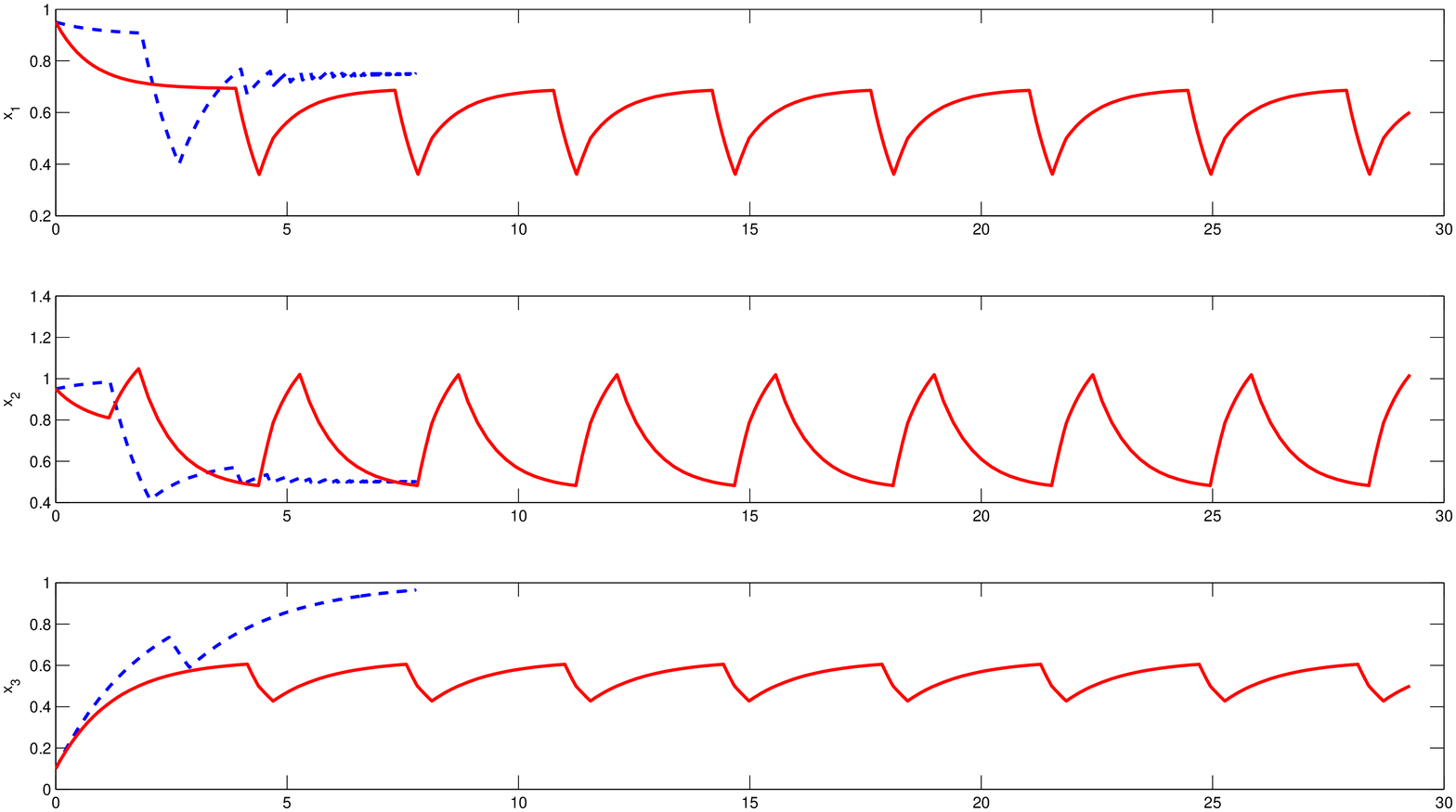}
\caption{\label{fig-ex2-3d}Same curves as Figure~\ref{fig-ex2-1d}, {\it vs} time.  The blue curve stops before the red one for the following reason. The numerical solutions are computed using the transition map, from wall to wall, and the uncontrolled trajectory crosses successive thresholds within time intervals tending to zero.}
\end{figure}

\section{Conclusion}
We have given, and illustrated by two examples, a control methodology to make unique stable limit cycles appear or disappear in hybrid PWA systems. The obtained feedback laws are termed qualitative control because they depend only on a qualitative abstraction of the original system; its transition graph.\\
Future work suggested by this study are mostly related to the question of dynamic feedback. Actually, the first example shows the effective possibility of using an additional variable to control a system, i.e. to design a controller system to be coupled to the original one. Moreover, the design of this dynamic feedback relied in a simple way on the static feedback problem. This technique should be formalised in more general terms, and applied to other examples in the future.


\section{Acknowledgements}
J.-L.G. was partly funded by the LSIA-INRIA {\em Colage} and the ANR {\em MetaGenoReg} projects. E.F. was partly funded by the ANR-BBSRC {\em Flower Model} project.



\appendix
\section{Parameter values}\label{sec-param}
\subsection{Parameters for Figure \ref{fig-ex1-2D}}\label{sec-param-1}
$$
\begin{array}{|c|c|c|c|c|c|c|c|}
\hline
K_1 &  K_2 & \gamma_1^0 & \gamma_1^1& \gamma_2^0 &\theta_1^1&\theta_1^2 & \theta_2^{1\phantom{\displaystyle A}} \\
\hline
0.9  &0.2 &1&1&0.3& 0.5 & 0.75 & 0.5\\
\hline
\end{array}
$$
Moreover the value $u(a)$ is computed as the middle-point of the interval defined by (\ref{eq-solex1}).

\subsection{Parameters for Figure \ref{fig-ex1-3D}}\label{sec-param-2}
$$
\begin{array}{|c|c|c|c|c|c|c|c|c|}
\hline
K_1 &  K_2 & \gamma_1^0 & \gamma_1^1& \gamma_2^0 &\theta_1^1&\theta_1^2 & \theta_2^{1\phantom{\displaystyle A}}&\theta_y  \\
\hline
0.9  &0.2 &1&1&0.3& 0.5 & 0.75 & 0.5 & 0.5\\
\hline
\end{array}
$$
To check the inequality in proposition \ref{prop-ex1}, we need to compute $ \left(\frac{\phi_1-\theta_1^2}{\phi_1-\theta_1^1}\right)^{\frac{1}{\gamma_1^0}}=0.375$. Then, the two values of $\gamma_y$ we have tested are $0.1$ and $1.7$, for which $\left(1-\gamma_y\theta_y\right)^{\frac{1}{\gamma_y}}$ is respectively close to $0.599$ (inequality satisfied) and $0.328$ (inequality violated).

\subsection{Parameters for Figures \ref{fig-ex2-1d} and \ref{fig-ex2-3d}}\label{sec-param-3}
$$
\begin{array}{|c|c|c|c|c|c|c|c|c|c|}
\hline
K_1 &  K_2^1 & K_2^3 & K_3 & \gamma_1^0 & \gamma_2^0& \gamma_3^0 &\gamma_i^1 &\theta_i^1&\theta_1^{2\phantom{\displaystyle A}}\\
\hline
0.9  &0.6 &1&0.5 & 1 & 1 & 0.5 & 1 & 0.5 & 0.75 \\
\hline
\end{array}
$$
where $i$ stands for all values in $\{1,2,3\}$
For these values, inequality (\ref{eq-solex2}) writes, term by term:
$$
\max\{0.2,0.2\}<u(a)<\min\{1,0.8,0.5\}
$$
and we have chosen $u(a)=0.3$ in the simulations.

\end{document}